\documentclass[11pt]{article}
\usepackage[a4paper,top=2.6cm,bottom=2.6cm,left=2.7cm,right=2.7cm]{geometry}
\usepackage{amsmath,amssymb,amsthm,mathtools}
\usepackage{microtype}
\usepackage{enumitem}
\usepackage[hidelinks]{hyperref}
\usepackage{newunicodechar}
\newcommand{\St}{\operatorname{St}}

\newcommand{\Aut}{\operatorname{Aut}}
\newcommand{\Tor}{\operatorname{Tor}}
\newcommand{\reg}{\operatorname{reg}}
\newcommand{\depth}{\operatorname{depth}}
\newcommand{\pd}{\operatorname{pd}}

\newcommand{\NS}{\operatorname{NS}}
\newcommand{\qbinom}[2]{\genfrac{[}{]}{0pt}{}{#1}{#2}_p}
\newtheorem{theorem}{Theorem}[section]
\newtheorem{proposition}[theorem]{Proposition}
\newtheorem{corollary}[theorem]{Corollary}
\newtheorem{lemma}[theorem]{Lemma}
\theoremstyle{remark}
\newtheorem{remark}[theorem]{Remark}
\newunicodechar{Φ}{\ensuremath{\Phi}}
\newunicodechar{Δ}{\ensuremath{\Delta}}
\newunicodechar{β}{\ensuremath{\beta}}
\newunicodechar{μ}{\ensuremath{\mu}}
\newunicodechar{ρ}{\ensuremath{\rho}}
\newunicodechar{α}{\ensuremath{\alpha}}
\newunicodechar{Σ}{\ensuremath{\sum}}
\newunicodechar{≃}{\ensuremath{\simeq}}
\newunicodechar{≅}{\ensuremath{\cong}}
\newunicodechar{≤}{\ensuremath{\le}}
\newunicodechar{≥}{\ensuremath{\ge}}
\newunicodechar{≠}{\ensuremath{\ne}}
\newunicodechar{∈}{\ensuremath{\in}}
\newunicodechar{⊆}{\ensuremath{\subseteq}}
\newunicodechar{∩}{\ensuremath{\cap}}
\newunicodechar{∪}{\ensuremath{\cup}}
\newunicodechar{→}{\ensuremath{\to}}
\newunicodechar{↦}{\ensuremath{\mapsto}}
\newunicodechar{∨}{\ensuremath{\vee}}
\newunicodechar{−}{\ensuremath{-}}
\newunicodechar{⟨}{\ensuremath{\langle}}
\newunicodechar{⟩}{\ensuremath{\rangle}}
\newunicodechar{⋃}{\ensuremath{\bigcup}}
\newunicodechar{⋯}{\ensuremath{\cdots}}
\newunicodechar{∥}{\ensuremath{\parallel}}
\newunicodechar{̂}{\ensuremath{\hat{}}}
\newunicodechar{̃}{\ensuremath{\widetilde{}}}
\title{\bfseries Frattini Geometry of Non-Generating Complexes:\\Symmetry, Homology and Betti Numbers}
\author{Luis Alfredo Dupont García\thanks{Corresponding author: \href{mailto:ldupont@uv.mx}{ldupont@uv.mx}}\\[0.35em]
\small Facultad de Matemáticas, Universidad Veracruzana\\
\small Xalapa, Veracruz, Mexico}
\date{\small Preprint --- October 2026}
\begin{document}
\maketitle
\begin{center}
\small ORCID: \href{https://orcid.org/0000-0002-9044-3547}{0000-0002-9044-3547}
\end{center}
\vspace{-0.5em}
\begin{abstract}
For a finite group \(G\), let \(N(G)\) be the simplicial complex of
subsets that do not generate \(G\). We separate a general
Frattini-topological reduction from the structure special to finite
\(p\)-groups. For every finite noncyclic group, the non-cone Frattini
core is homotopy equivalent to the order complex of the proper part of
the subgroup lattice of \(G/\Phi(G)\). For a finite \(p\)-group this
quotient is an elementary abelian vector space, and the core becomes
the non-spanning complex of a uniform parallel extension of
\(\mathrm{PG}(r-1,p)\), where \(r=d(G)\) and \(q=|\Phi(G)|\).
We use this geometry to determine the exact simplicial isomorphism data
and full simplicial automorphism group, identify top homology
equivariantly with the appropriate restriction of the Steinberg module,
and derive modular consequences. We also give an explicit supportwise
formula for every multigraded Betti number and compare it with a closed
single-sum formula for the \(\mathbb Z\)-graded Betti numbers. Known
matroidal and building-theoretic inputs are stated separately from the
group-specific consequences. The resulting framework also yields the
homotopy type, depth, regularity, projective dimension, face
enumeration, and the complete Cohen--Macaulay classification.
\end{abstract}
\noindent\textbf{Keywords:} finite \(p\)-groups; Frattini subgroup; non-generating complex; Steinberg module; matroids; Betti numbers.\par
\noindent\textbf{2020 Mathematics Subject Classification:} 20D15, 20C33, 05B35, 05E18, 13F55, 55U10.

\section{Introduction}\label{introduction}

Simplicial and hypergraph structures attached to generating sets of
finite groups have recently received systematic attention \cite{ref1,ref2,ref3,ref4}.
For finite \(p\)-groups, the generating structure is matroidal \cite{ref2,ref3},
while the topology of non-spanning matroid complexes, linear
resolutions of matroidal ideals, and Tutte-theoretic descriptions of
\(\Tor\) provide established combinatorial and algebraic input
\cite{ref5,ref6,ref7,ref8,ref9,ref19,ref22}. The purpose of this paper is not to reprove those
theories, but to isolate the Frattini geometry that links them and to
extract group-theoretic, equivariant, and multigraded consequences.

The first point is independent of the prime-power hypothesis. If \(G\)
is finite and noncyclic, \(\Phi(G)\) consists precisely of the cone
vertices of \(N(G)\), and after deleting them the resulting Frattini
core is controlled by the interval \([\Phi(G),G]\) in the subgroup
lattice. For \(p\)-groups, Burnside's basis theorem turns this
quotient-lattice picture into projective geometry over
\(\mathbb F_p\). This is the step that makes the later formulas
explicit.

\medskip\noindent\textbf{Known input and new contributions.}
We use as known input the matroidal nature of generating sets of finite
\(p\)-groups \cite{ref2,ref3}, standard topology of non-spanning complexes
\cite{ref9,ref26}, the Solomon--Tits theorem \cite{ref10,ref14}, linear resolutions of
matroidal ideals \cite{ref7,ref8}, Hochster's formula \cite{ref6}, and the NBC/Tutte
description of \(\Tor\) \cite{ref19}. A useful structural device is the
Frattini-core reduction of Section~3, obtained from the nerve and
crosscut theorems and placed in the context of subgroup-poset topology
\cite{ref16,ref17,ref21}. The strongest algebraic refinement is supportwise: for
every squarefree multidegree \(W\) we determine \(\beta_{i,W}\)
through the Möbius invariant of the projective support configuration.
We then derive exact \(p\)-group reconstruction and non-reconstruction
statements, the full simplicial symmetry description, an equivariant
Steinberg formulation with realization and modular consequences, and
closed graded formulas in the parameters \((p,r,q)\).

The paper is organized so that these two layers remain visible.
Section~3 contains the finite-group Frattini reduction. Sections~4
and~5 exploit the elementary-abelian Frattini quotient of a
\(p\)-group to obtain projective rigidity, simplicial symmetries and
equivariant top homology. Sections~6 and~7 pass to Stanley--Reisner
theory, first at the multigraded level and then to closed
\(\mathbb Z\)-graded formulas. Section~8 records the depth and
Cohen--Macaulay consequences, and Section~9 gives numerical checks.

\section{Background and standard matroidal input}
\label{background-and-standard-matroidal-input}

\subsection{Non-generating complexes and effective vertices}
\label{non-generating-complexes-and-effective-vertices}

For a finite group \(G\), define
\[
N(G)=\{A\subseteq G:\langle A\rangle\neq G\},
\]
using the effective vertex set
\[
V(N(G))=\{g\in G:\langle g\rangle\neq G\}.
\]
The Stanley--Reisner ring \(K[N(G)]\) is always taken over the
polynomial ring on these effective vertices. If \(G\) is noncyclic,
every element is an effective vertex and the facets are exactly the
maximal subgroups.

\subsection{Standard facts for non-spanning matroid complexes}
\label{standard-facts-for-non-spanning-matroid-complexes}

Let \(M\) be a loopless rank-\(r\) matroid on a ground set \(E\) of
cardinality \(n\), and put \(\Delta=\NS(M)\). We use the following
standard input.

\begin{enumerate}[label=(\alph*)]
\item By the crosscut description of a geometric lattice and matroid
shellability, \(\Delta\) is homotopy equivalent to the order complex
of the proper part of \(L_M\), and hence to a wedge of
\[
\left|\mu_{L_M}(\hat0,\hat1)\right|
\]
spheres of dimension \(r-2\) \cite{ref9,ref26}.

\item The \((r-2)\)-skeleton of \(\Delta\) is the complete
\((r-2)\)-skeleton on \(E\), hence is Cohen--Macaulay. Hibi's
skeleton-depth criterion \cite{ref27} therefore gives
\(\depth K[\Delta]\ge r-1\). On the other hand, Hochster's formula
\cite{ref6} and the nonzero top reduced homology give
\[
\beta_{n-r+1,E}\neq0,
\]
so \(\pd K[\Delta]\ge n-r+1\). Auslander--Buchsbaum then forces
\[
\pd K[\Delta]=n-r+1,
\qquad
\depth K[\Delta]=r-1.
\]
The terminal Betti number is
\(\left|\mu_{L_M}(\hat0,\hat1)\right|\) \cite{ref22}.

\item The ring \(K[\Delta]\) is Cohen--Macaulay exactly when \(M\) is
uniform. Indeed, \(\dim K[\Delta]\) is the maximum cardinality of a
hyperplane, whereas \(\depth K[\Delta]=r-1\). Equality forces every
hyperplane to have \(r-1\) elements. If \(M\) were not uniform, it
would have a circuit \(C\) with \(|C|\le r\). Choose \(e\in C\) and
extend the independent set \(C\setminus\{e\}\) to a hyperplane \(H\).
Because \(e\in\operatorname{cl}(C\setminus\{e\})\), the hyperplane
contains all of \(C\). If \(|C|<r\), extending the rank
\(|C|-1\) flat \(\operatorname{cl}(C)\) to rank \(r-1\) requires
\(r-|C|\) additional independent elements outside
\(\operatorname{cl}(C)\). Thus
\[
|H|\ge |C|+(r-|C|)=r,
\]
a contradiction. Hence \(M=U_{r,n}\); the converse is
immediate.
\end{enumerate}

These facts are recalled only to make the specialization boundary
explicit.

\section{Frattini cores of arbitrary finite groups}
\label{frattini-cores-of-arbitrary-finite-groups}

Let \(G\) be a finite noncyclic group, put \(\Phi=\Phi(G)\), and define
\[
C_{\Phi}(G):=N(G)|_{G\setminus\Phi}.
\]

\begin{proposition}[join decomposition]\label{prop:join}
One has
\[
N(G)=2^{\Phi}*C_{\Phi}(G).
\]
Hence \(K[N(G)]\) is a polynomial extension of \(K[C_{\Phi}(G)]\).
\end{proposition}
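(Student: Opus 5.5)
The plan is to verify the join decomposition directly on faces, using the non-generator characterization of the Frattini subgroup: \(\Phi(G)\) is exactly the set of elements \(x\in G\) such that \(\langle A,x\rangle=G\) implies \(\langle A\rangle=G\) for every subset \(A\subseteq G\). Equivalently, \(\Phi(G)\) is the intersection of the maximal subgroups. From this it follows at once that for any \(A\subseteq G\), one has \(\langle A\rangle=\langle A\setminus\Phi\rangle\Phi\), and that \(\langle A\rangle=G\) if and only if \(\langle A\setminus\Phi\rangle=G\). Indeed, adjoining finitely many non-generators one at a time never turns a non-generating set into a generating one.

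First I check that the vertex sets match. Since \(G\) is noncyclic, the effective vertex set of \(N(G)\) is all of \(G\), which is the disjoint union of \(\Phi\) and \(G\setminus\Phi\). Next, let \(A\subseteq G\) and write \(A=B\sqcup C\) with \(B=A\cap\Phi\) and \(C=A\setminus\Phi\). By the characterization above, \(A\in N(G)\) if and only if \(C\in N(G)\). Since \(C\subseteq G\setminus\Phi\), the latter is equivalent to \(C\in C_\Phi(G)\). The set \(B\) is arbitrary among subsets of \(\Phi\), and every subset of \(\Phi\) lies in \(N(G)\) because \(\langle\Phi\rangle=\Phi\neq G\). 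Hence the faces of \(N(G)\) are exactly the sets \(B\sqcup C\) with \(B\in 2^{\Phi}\) and \(C\in C_\Phi(G)\), which is precisely the definition of the join \(2^{\Phi}*C_\Phi(G)\).

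For the ring statement, I use that the minimal nonfaces of a join \(\Delta_1*\Delta_2\) on disjoint vertex sets are the minimal nonfaces of \(\Delta_1\) together with those of \(\Delta_2\). The simplex \(2^{\Phi}\) has no nonfaces, so the Stanley–Reisner ideal of \(N(G)\) is generated by the extension of the Stanley–Reisner ideal of \(C_\Phi(G)\). This gives
\[
K[N(G)]\cong K[C_\Phi(G)]\otimes_K K[x_g:g\in\Phi]=K[C_\Phi(G)][x_g:g\in\Phi],
\]
a polynomial extension in \(|\Phi|\) variables.

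There is no serious obstacle here. The only point requiring care is the non-generator property for an arbitrary subset \(A\) rather than a single element. I would handle it by the standard argument: if \(\langle C\rangle\neq G\), then \(\langle C\rangle\) lies in some maximal subgroup \(M\). Since \(\Phi\subseteq M\), we get \(\langle C\cup B\rangle\subseteq M\neq G\). The converse implication is trivial.
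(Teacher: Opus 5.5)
Your proof is correct and follows essentially the same route as the paper: \(\Phi\) lies in every maximal subgroup, so its elements are cone vertices, and each face splits uniquely into its \(\Phi\)-part and its part outside \(\Phi\). Your minimal-nonface argument makes explicit the polynomial-extension claim that the paper leaves implicit. One aside is wrong but unused: \(\langle A\rangle=\langle A\setminus\Phi\rangle\Phi\) fails in general (take \(A=\emptyset\) with \(\Phi\neq1\)), and the correct identity is \(\langle A\rangle\Phi=\langle A\setminus\Phi\rangle\Phi\). Only the equivalence \(\langle A\rangle=G\iff\langle A\setminus\Phi\rangle=G\) is needed, and your closing maximal-subgroup argument proves it correctly.
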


\begin{proof}
Every maximal subgroup contains \(\Phi\), so every element of \(\Phi\)
is a cone vertex. Conversely, a face splits uniquely into its
\(\Phi\)-part and its part outside \(\Phi\).
\end{proof}

\begin{proposition}[structural Frattini-core reduction]
\label{prop:frattini-core}
Let \(G\) be a finite noncyclic group. There is a natural homotopy equivalence
\begin{equation}\label{eq:frattini-core}
C_{\Phi}(G)\simeq
\Delta((\Phi(G),G))
\cong
\Delta(\overline L(G/\Phi(G))).
\end{equation}
\end{proposition}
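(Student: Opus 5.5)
We propose to apply the nerve theorem to the cover of \(C_{\Phi}(G)\) by its facets, and then to identify the resulting nerve with the order complex of the open interval \((\Phi,G)\) through a crosscut argument.

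\textbf{The facet cover.} Since \(G\) is noncyclic, the facets of \(N(G)\) are exactly the maximal subgroups \(M_1,\dots,M_k\). By Proposition~\ref{prop:join}, the facets of \(C_{\Phi}(G)\) are the sets \(M_i\setminus\Phi\). These are nonempty: \(\Phi\) is a proper subgroup of \(M_i\), since otherwise \(\Phi\) would be maximal, and then \(G/\Phi\) would have prime order and \(G\) would be cyclic. The full simplices \(2^{M_i\setminus\Phi}\) therefore cover \(C_{\Phi}(G)\). Each nonempty intersection of them is again a full simplex \(2^{(\bigcap_{i\in I}M_i)\setminus\Phi}\), which is contractible. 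By the nerve theorem, \(C_{\Phi}(G)\) is homotopy equivalent to the nerve \(\mathcal N\). Here \(\mathcal N\) is the complex on \(\{M_1,\dots,M_k\}\) whose simplices are the sets \(I\) with \(\bigcap_{i\in I}M_i\not\subseteq\Phi\).

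\textbf{Crosscut step, the main obstacle.} We must identify \(\mathcal N\) with the crosscut complex of the lattice \([\Phi,G]\), taking the coatoms as the crosscut. The crosscut complex consists of the sets of coatoms whose meet is not \(\hat0=\Phi\). Because \(\Phi=\bigcap_i M_i\), every intersection of maximal subgroups contains \(\Phi\). Hence \(\bigcap_I M_i\not\subseteq\Phi\) holds exactly when \(\bigcap_I M_i\neq\Phi\), that is, when the meet in \([\Phi,G]\) differs from \(\hat0\). The care needed is in this equivalence, which relies on \(\Phi\) being the bottom of the interval, and in noting that the empty set causes no trouble. The crosscut theorem then gives \(\mathcal N\simeq\Delta((\Phi,G))\).

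As an alternative that avoids the crosscut theorem, we could apply Quillen's fiber lemma. Consider the poset of faces \(A\) of \(C_{\Phi}(G)\) and the order-preserving map \(A\mapsto\langle A\rangle\Phi\) into \((\Phi,G)\). This subgroup is proper because \(A\) lies in some maximal subgroup. It is strictly above \(\Phi\) because \(A\neq\emptyset\) and \(A\cap\Phi=\emptyset\). For each \(H\in(\Phi,G)\), the fiber \(\{A:\langle A\rangle\Phi\le H\}\) is the face poset of the full simplex \(2^{H\setminus\Phi}\). This simplex is nonempty since \(H\neq\Phi\), so the fiber is contractible. Quillen's lemma yields the homotopy equivalence directly, and this version makes naturality evident.

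\textbf{The isomorphism.} The correspondence theorem gives a lattice isomorphism \(H\mapsto H/\Phi\) from \([\Phi,G]\) onto the subgroup lattice \(L(G/\Phi)\). It carries the open interval onto the proper part \(\overline L(G/\Phi)\), and applying order complexes gives the final isomorphism in \eqref{eq:frattini-core}.
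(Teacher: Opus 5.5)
Your proof is correct and follows the paper's argument step for step: cover \(C_{\Phi}(G)\) by the simplices \(2^{M\setminus\Phi}\), apply the nerve theorem, identify the nerve with the coatom crosscut complex of \([\Phi,G]\), invoke the crosscut theorem, and finish with the correspondence theorem. Your alternative route through Quillen's fiber lemma, using the map \(A\mapsto\langle A\rangle\Phi\), is also valid. It has the added benefit that the \(\operatorname{Aut}(G)\)-naturality used later in Theorem~5.3 is immediate, since the homotopy equivalence is induced by an explicit equivariant poset map rather than requiring an equivariant version of the nerve and crosscut theorems.
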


\begin{proof}
Every proper subgroup of a finite group is contained in a maximal
subgroup. Hence \(A\subseteq G\setminus\Phi\) fails to generate \(G\)
exactly when it is contained in \(M\setminus\Phi\) for some maximal
subgroup \(M<G\). Therefore
\[
C_{\Phi}(G)
=
\bigcup_{M\ \mathrm{max}\ G}\Delta_M,
\qquad
\Delta_M=2^{\,M\setminus\Phi}.
\]
For maximal subgroups \(M_1,\ldots,M_s\),
\[
\Delta_{M_1}\cap\cdots\cap\Delta_{M_s}
=
2^{(M_1\cap\cdots\cap M_s)\setminus\Phi}.
\]
As an abstract simplicial complex this intersection always contains
the empty face; the condition relevant to the nerve lemma is that it
contain a vertex. This occurs exactly when
\[
M_1\cap\cdots\cap M_s>\Phi.
\]
Whenever that happens, the intersection is a nonempty simplex and is
therefore contractible.

Consequently the nerve of the cover has one vertex for each maximal
subgroup of \(G\), and a face \(\{M_1,\ldots,M_s\}\) precisely when
\(M_1\cap\cdots\cap M_s>\Phi\). This is the coatom crosscut complex
of the interval \([\Phi,G]\). The crosscut theorem \cite{ref9} identifies its
homotopy type with the order complex \(\Delta((\Phi,G))\).

Finally, the correspondence theorem gives a lattice isomorphism
\[
[\Phi,G]\longrightarrow L(G/\Phi),
\qquad
H\longmapsto H/\Phi.
\]
Removing the bottom and top elements proves
\eqref{eq:frattini-core}. The construction is natural with respect to
automorphisms of \(G\), a point used later in the equivariant section.
For related subgroup-lattice topology and shellability, see
\cite{ref16,ref17,ref21}.
\end{proof}

\begin{corollary}[Euler characteristic of the Frattini core]
\label{cor:euler-frattini}
For every finite noncyclic group \(G\),
\[
\widetilde\chi(C_{\Phi}(G))
=
\mu_{L(G)}(\Phi(G),G)
=
\mu_{L(G/\Phi(G))}(1,G/\Phi(G)).
\]
Thus the abstract complex \(N(G)\) determines the Hall Möbius value
of the Frattini quotient.
\end{corollary}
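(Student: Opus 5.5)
The plan is to read off the reduced Euler characteristic from the homotopy equivalence of Proposition~\ref{prop:frattini-core} and then invoke Philip Hall's theorem, which expresses the Möbius function of a bounded poset through the order complex of its open interval. Reduced Euler characteristic is a homotopy invariant, because it is determined by the reduced homology. So the equivalence $C_\Phi(G)\simeq\Delta((\Phi(G),G))$ immediately gives
\[
\widetilde\chi(C_\Phi(G))=\widetilde\chi(\Delta((\Phi(G),G))).
\]

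Next I apply Hall's theorem to the interval $[\Phi(G),G]$ of $L(G)$. For any finite bounded poset $P$ with $\hat0\neq\hat1$, we have $\mu_P(\hat0,\hat1)=\widetilde\chi(\Delta(\overline P))$, where $\overline P=(\hat0,\hat1)$. This rests on counting chains: $\mu(\hat0,\hat1)=\sum_{k}(-1)^k c_k$, with $c_k$ the number of chains $\hat0=x_0<\dots<x_k=\hat1$. A chain of length $k$ corresponds to a $(k-2)$-simplex of $\Delta(\overline P)$, where the case $k=1$ gives the empty face. Summing with these signs produces $\widetilde\chi$.

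The hypothesis $\hat0\neq\hat1$ holds here, since $G$ noncyclic forces $\Phi(G)<G$. Indeed, if $\Phi(G)=G$ then $G$ would have no maximal subgroups, which is impossible for $G\neq1$; and a cyclic $G$ is excluded anyway. Note also that the Möbius function of an interval is intrinsic to that interval, so $\mu_{L(G)}(\Phi,G)=\mu_{[\Phi,G]}(\Phi,G)$.

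For the second equality, I use the lattice isomorphism $[\Phi,G]\cong L(G/\Phi)$, $H\mapsto H/\Phi$, from the correspondence theorem. Möbius values are preserved by poset isomorphisms, so $\mu_{L(G)}(\Phi(G),G)=\mu_{L(G/\Phi(G))}(1,G/\Phi(G))$.

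The final sentence of the statement then follows directly. The complex $N(G)$ determines its cone-vertex set, which is exactly $\Phi$: every element of $\Phi$ is a cone vertex by Proposition~\ref{prop:join}. Conversely, an element $g\notin\Phi$ lies outside some maximal subgroup $M$, so the facet $M$ does not contain $g$ and $g$ is not a cone vertex. Deleting the cone vertices recovers $C_\Phi(G)$ intrinsically, and hence recovers $\widetilde\chi(C_\Phi(G))$.

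The only delicate point is the normalization convention in Hall's theorem, where the empty face must contribute $-1$. I expect this to be a bookkeeping matter rather than a genuine obstacle.
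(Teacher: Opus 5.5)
Your proposal is correct and follows essentially the same route as the paper: it uses homotopy invariance of $\widetilde\chi$ with the equivalence of Proposition~\ref{prop:frattini-core}, Hall's theorem identifying $\widetilde\chi(\Delta((\Phi(G),G)))$ with $\mu_{L(G)}(\Phi(G),G)$, and the correspondence-theorem isomorphism $[\Phi(G),G]\cong L(G/\Phi(G))$. Your cone-vertex argument for the final sentence spells out a step the paper leaves implicit (the observation recorded in Proposition~\ref{prop:general-reconstruction}), and your chain-counting check of the sign convention is accurate.
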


\begin{proof}
The reduced Euler characteristic of the order complex of the open
interval \((\Phi(G),G)\) is the Möbius value
\(\mu_{L(G)}(\Phi(G),G)\). Proposition~\ref{prop:frattini-core} and
the lattice isomorphism
\([\Phi(G),G]\cong L(G/\Phi(G))\) give the equalities. This connects
the Frattini-core reduction with Hall's Eulerian/Möbius framework \cite{ref20}
and Brown's probabilistic-zeta viewpoint \cite{ref21}.
\end{proof}

\begin{proposition}[general reconstruction data]
\label{prop:general-reconstruction}
If \(G\) is finite and noncyclic, the abstract complex \(N(G)\)
determines \(|G|\) and \(|\Phi(G)|\). Indeed, \(|G|\) is the number
of vertices, while \(\Phi(G)\) is the set of vertices lying in every
facet.
\end{proposition}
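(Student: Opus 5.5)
The plan is to read both invariants directly off the vertex set and the facet structure of \(N(G)\), using only the description of \(N(G)\) recalled in Section~2 and Proposition~\ref{prop:join}. I first fix what ``the abstract complex determines'' means: any simplicial isomorphism \(N(G)\cong N(H)\) with \(G,H\) finite noncyclic should preserve the two quantities. It therefore suffices to express each quantity purely in terms of the combinatorial structure of \(N(G)\).

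For \(|G|\), I will use the effective vertex set \(V(N(G))=\{g\in G:\langle g\rangle\neq G\}\). Since \(G\) is noncyclic, no single element generates \(G\). Hence every \(g\in G\), including the identity, lies in \(V(N(G))\), and so \(|V(N(G))|=|G|\). The only care needed is to use the convention of Section~2: vertices are the effective vertices, and for noncyclic \(G\) this is the whole group, so no vertex is silently omitted.

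For \(|\Phi(G)|\), I will use that the facets of \(N(G)\) are exactly the maximal subgroups of \(G\), as stated in Section~2. The argument is that a subset fails to generate precisely when it lies in a proper subgroup. Every proper subgroup of a finite group lies in a maximal one, and a maximal subgroup \(M\) is itself a non-generating set that is maximal under inclusion. Then the set of vertices lying in every facet is \(\bigcap_{M\ \mathrm{max}\ G} M=\Phi(G)\), by the definition of the Frattini subgroup. This matches the cone-vertex description in the proof of Proposition~\ref{prop:join}. One should note that a finite noncyclic group is nontrivial, so at least one maximal subgroup exists and the intersection is over a nonempty family. Since facets and their common vertices are preserved by simplicial isomorphisms, \(|\Phi(G)|\) is an invariant of \(N(G)\).

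I do not expect a real obstacle; the statement is essentially bookkeeping. The one point deserving a sentence is that the characterization is intrinsic: ``number of vertices'' and ``vertices in every facet'' (equivalently, cone vertices) are defined without reference to \(G\). Consequently an isomorphism \(N(G)\to N(H)\) restricts to a bijection \(\Phi(G)\to\Phi(H)\), which gives \(|\Phi(G)|=|\Phi(H)|\).
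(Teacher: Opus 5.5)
Your proposal is correct and follows the paper's own argument. For noncyclic \(G\), every element is an effective vertex, so the vertex count is \(|G|\). The facets are the maximal subgroups, so the vertices lying in every facet (the cone vertices, an intrinsic notion preserved by simplicial isomorphisms) form \(\Phi(G)\); your extra checks only make explicit what the paper's one-line ``Indeed'' leaves implicit.
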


\section{Finite \(p\)-groups: projective structure and reconstruction}
\label{finite-$p$-groups-projective-structure-and-reconstruction}

Assume now that \(G\) is a finite \(p\)-group. Put
\[
\Phi=\Phi(G),\qquad r=d(G),\qquad q=|\Phi|,\qquad
V=G/\Phi\cong\mathbb F_p^r,\qquad E=G\setminus\Phi,
\]
and set \(C(G):=C_\Phi(G)\). From now on, except in Theorem~4.2 and Theorem~8.1, we assume that \(G\) is noncyclic, i.e. \(r\ge2\). Burnside's basis theorem says that
\(A\subseteq G\) generates \(G\) exactly when its image spans \(V\).
Hence, for \(A\subseteq E\),
\[
\operatorname{rk}_G(A)
=
\dim_{\mathbb F_p}\operatorname{span}\{a\Phi:a\in A\}
\]
defines a rank-\(r\) linear matroid \(M_G\) \cite{ref2,ref3,ref5}, and
\(C(G)=\NS(M_G)\).

For each one-dimensional subspace \(L\le V\), the fiber
\[
E_L=\{g\in E:g\Phi\in L\setminus\{0\}\}
\]
has cardinality \(q(p-1)\). These fibers are precisely the parallel
classes of \(M_G\); after simplification one obtains
\(\mathrm{PG}(r-1,p)\). Thus \(M_G\) is the uniform parallel extension
of \(\mathrm{PG}(r-1,p)\) with multiplicity
\[
m=q(p-1).
\]

This is the point at which the \(p\)-group hypothesis becomes
decisive. For an arbitrary finite group, \(G/\Phi(G)\) need not be
elementary abelian, so the projective-matroid model used below is no
longer available in this form; compare \cite{ref3}. In the \(p\)-group case,
however, the entire core is determined by \((p,r,q)\).

\medskip\noindent\textbf{Corollary 4.1 (explicit homotopy type).}
If \(r\ge3\), then
\[
C(G)\simeq
\bigvee^{\,p^{\binom r2}} S^{r-2}.
\]
If \(r=2\), then \(C(G)\) is the disjoint union of \(p+1\) simplices,
and therefore
\[
\widetilde H_0(C(G);K)\cong K^p.
\]

\begin{proof}
By Proposition~3.2, \(C(G)\) is homotopy equivalent to the order
complex of the proper subspace lattice of \(V\). For \(r\ge3\), this
is the spherical building of \(\mathrm{GL}(r,p)\). The Solomon--Tits
theorem gives a wedge of \((r-2)\)-spheres, and
\[
\mu(0,V)=(-1)^r p^{\binom r2},
\]
so the number of spheres is \(p^{\binom r2}\). For \(r=2\), the
proper nonzero subspaces are the \(p+1\) projective points, and the
corresponding core is a disjoint union of \(p+1\) contractible
simplices.
\end{proof}

\medskip\noindent\textbf{Theorem 4.2 (complete classification for nontrivial finite \(p\)-groups).}
For \(i=1,2\), let \(G_i\) be a nontrivial finite \(p_i\)-group, put
\[
r_i=d(G_i),\qquad q_i=|\Phi(G_i)|.
\]
Then \(N(G_1)\cong N(G_2)\) if and only if either both groups are
cyclic and \(q_1=q_2\), or both are noncyclic and
\[
(p_1,r_1,q_1)=(p_2,r_2,q_2).
\]

\begin{proof}
If \(G\) is cyclic of order \(p^a\), its effective vertex set is
\(\Phi(G)\), and every subset of that set is a face. Hence \(N(G)\)
is a simplex on \(q=|\Phi(G)|\) vertices.

Now suppose that \(G\) is noncyclic. Then every group element is an
effective vertex, so the number of vertices gives
\[
|G|=qp^r.
\]
The cone vertices are exactly \(\Phi(G)\), hence the abstract complex
determines \(q\). Therefore it also determines
\[
p^r=\frac{|G|}{q}>1.
\]
Since this integer is a prime power, its unique prime divisor is
\(p\), and the corresponding exponent is \(r\). Conversely,
the construction above shows that \((p,r,q)\) determines the core and
the \(q\)-vertex simplex factor. Finally, a noncyclic \(p\)-group has
at least \(p+1\) maximal subgroups, so its complex cannot be a
simplex.
\end{proof}

\medskip\noindent\textbf{Corollary 4.3 (non-reconstruction).}
Distinct groups may have the same complex. For \(p=2\),
\[
C_4\times C_2,\qquad D_8,\qquad Q_8
\]
share \((p,r,q)=(2,2,2)\). For odd \(p\),
\[
C_{p^2}\times C_p,\qquad UT_3(p),\qquad M_{p^3}
\]
all have \(d=2\) and \(|\Phi|=p\), while the two nonabelian examples
have exponents \(p\) and \(p^2\), respectively. Thus \(N(G)\) does
not in general detect isomorphism type, abelianity, or exponent.

\section{Simplicial automorphisms and equivariant
homology}\label{simplicial-automorphisms-and-equivariant-homology}

Let
\[
\Omega=\mathrm{PG}(r-1,p),\qquad
\theta_r(p)=\frac{p^r-1}{p-1},\qquad
m=q(p-1).
\]
A simplicial automorphism of \(C(G)\) preserves facets, hence
hyperplanes, spanning sets and bases; therefore
\(\Aut(C(G))=\Aut(M_G)\). The parallel classes are intrinsic as
equality classes of facet-incidence patterns.

\begin{lemma}[recovery of the matroid from the non-spanning complex]
\label{lem:recover-matroid}
Let \(M\) be a loopless rank-\(r\) matroid. A permutation of the
ground set is a simplicial automorphism of \(\NS(M)\) if and only if
it is a matroid automorphism of \(M\). Consequently
\[
\Aut(\NS(M))=\Aut(M).
\]
\end{lemma}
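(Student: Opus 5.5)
The plan is to show that the matroid \(M\) can be read off intrinsically from the face structure of \(\NS(M)\). Once that is done, any permutation preserving faces preserves this data and therefore the matroid, and the converse is immediate.

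\emph{Direction matroid automorphism \(\Rightarrow\) simplicial automorphism.} Let \(\sigma\) be a matroid automorphism of \(M\), i.e.\ a permutation of \(E\) preserving rank, or equivalently preserving independent sets. Then \(\operatorname{rk}(\sigma A)=\operatorname{rk}(A)\) for every \(A\subseteq E\). In particular \(\sigma\) maps non-spanning sets (rank \(<r\)) to non-spanning sets, and so does \(\sigma^{-1}\). Hence \(\sigma\) is a simplicial automorphism of \(\NS(M)\).

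\emph{Converse.} Let \(\sigma\) be a permutation of \(E\) with \(A\in\NS(M)\iff\sigma A\in\NS(M)\). I will recover the matroid in two steps.

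\textbf{Step 1: facets are hyperplanes.} The facets of \(\NS(M)\) are exactly the hyperplanes of \(M\). A maximal non-spanning set is closed, since adding an element of its closure keeps the rank. It has rank \(r-1\), since a closed set of rank \(\le r-2\) lies in a proper larger flat of rank \(r-1\), which is still non-spanning. Conversely, a hyperplane is non-spanning, and adding any outside element raises the rank to \(r\), so a hyperplane is maximal. Since \(\sigma\) permutes facets, it permutes hyperplanes.

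\textbf{Step 2: hyperplanes determine the matroid.} The spanning sets are exactly the sets not contained in any hyperplane, i.e.\ the non-faces. Bases are the minimal spanning sets, so \(\sigma\) permutes bases. A matroid is determined by its bases: the independent sets are the subsets of bases, and \(\operatorname{rk}(A)\) is the maximal size of an independent subset of \(A\). Therefore \(\sigma\) preserves independence and rank, and is a matroid automorphism. Looplessness is not even essential here, but it matches the paper's effective-vertex convention, under which every element of \(E\) is a vertex, since every singleton is non-spanning when \(r\ge2\).

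The equality \(\Aut(\NS(M))=\Aut(M)\) follows as an equality of subgroups of \(\operatorname{Sym}(E)\).

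The only point needing care is a degenerate case. If \(r=1\), the complex \(\NS(M)\) is just \(\{\emptyset\}\), because every element of a loopless rank-one matroid spans. Then every permutation preserves it, and indeed every permutation is a matroid automorphism, since all singletons are bases. So the statement still holds, with the vertex set understood as the ground set \(E\). I expect no genuine obstacle beyond checking that facets coincide with hyperplanes, which is the main step.
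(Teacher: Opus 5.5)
Your proof is correct and follows the same route as the paper. Non-faces of \(\NS(M)\) are exactly the spanning sets, so a face-preserving permutation preserves the minimal spanning sets (the bases) and hence the matroid, while the converse follows from rank preservation. Your Step~1 identifying facets with hyperplanes is true but unnecessary: spanning sets are the non-faces directly by definition, which is all the paper uses.
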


\begin{proof} A subset A is spanning in M exactly when A is not a face of
NS(M). Hence every simplicial automorphism of NS(M) preserves spanning
subsets. The bases are precisely the inclusion-minimal spanning subsets,
so they are preserved as well. A permutation preserving the set of bases
is a matroid automorphism. The converse is immediate because a matroid
automorphism preserves rank and therefore the non-spanning subsets. \end{proof}

\medskip\noindent\textbf{Theorem 5.2 (simplicial automorphisms).}
One has
\[
\Aut(N(G))\cong S_q\times\Aut(C(G)).
\]
For \(r\ge3\),
\[
\Aut(C(G))
\cong S_m\wr_{\Omega}\mathrm{PGL}(r,p)
\cong (S_m)^{\theta_r(p)}\rtimes\mathrm{PGL}(r,p),
\]
whereas for \(r=2\),
\[
\Aut(C(G))\cong S_m\wr S_{p+1}.
\]
Under the natural action on the projective line,
\(\mathrm{PGL}(2,p)\le S_{p+1}\), with equality exactly for
\(p\in\{2,3\}\).

\begin{proof}
Cone vertices are intrinsic, so every automorphism of \(N(G)\)
preserves \(\Phi\), and arbitrary permutations of \(\Phi\) give the
direct factor \(S_q\). On \(E\), Lemma~5.1 identifies
\(\Aut(C(G))\) with \(\Aut(M_G)\).

The parallel classes of \(M_G\) are intrinsic. For distinct
\(x,y\in E\), one has \(x\parallel y\) exactly when \(x\) and \(y\)
lie in the same hyperplanes, equivalently in the same facets of
\(C(G)\). Indeed, if \(y\notin\operatorname{cl}\{x\}\), a hyperplane
containing \(\operatorname{cl}\{x\}\) but not \(y\) separates their
facet-incidence patterns. Thus a matroid automorphism permutes the
projective fibers and may independently permute the \(m\) elements
inside each fiber.

Collapsing the fibers gives the simplification
\(\mathrm{PG}(r-1,p)\). For \(r\ge3\), an automorphism preserves
rank-two flats and hence projective lines; by the fundamental theorem
of projective geometry \cite{ref24} it is induced by a semilinear
transformation. Since \(\mathbb F_p\) has no nontrivial field
automorphisms, the induced group is \(\mathrm{PGL}(r,p)\). For \(r=2\)
the simplification is \(U_{2,p+1}\), whose full automorphism group is
\(S_{p+1}\). Choosing identifications of the equal fibers gives the
displayed wreath-product descriptions.
\end{proof}

\subsection{Equivariant Steinberg homology}
\label{equivariant-steinberg-homology}

Let
\[
\rho_G:\Aut(G)\longrightarrow\mathrm{GL}(V)
\]
be the induced action and put
\[
\St_{V,K}:=
\widetilde H_{r-2}\!\left(\Delta(\overline L(V));K\right).
\]
The hyperplane cover and the crosscut construction are natural under
automorphisms.

\medskip\noindent\textbf{Theorem 5.3 (equivariant Frattini-core theorem).}
For every field \(K\) and \(r\ge2\),
\begin{equation}\label{eq:steinberg}
\widetilde H_{r-2}(C(G);K)
\cong
\rho_G^*\!\left(
\operatorname{Res}^{\mathrm{GL}(V)}_{\rho_G(\Aut(G))}
\St_{V,K}
\right)
\end{equation}
as \(K[\Aut(G)]\)-modules. In particular,
\(\ker\rho_G\) acts trivially and
\[
\dim_K\widetilde H_{r-2}(C(G);K)=p^{\binom r2}.
\]
If \(G=C_p^r\), this is the full Steinberg module.

\begin{proof}
Every \(\alpha\in\Aut(G)\) preserves \(\Phi(G)\), so it induces
\(\rho_G(\alpha)\in\mathrm{GL}(V)\). For each hyperplane \(H<V\),
the automorphism \(\alpha\) sends the corresponding simplex of the
hyperplane cover to that indexed by \(\rho_G(\alpha)H\). Hence the
cover is \(\Aut(G)\)-stable, and the induced action on its nerve is
the action obtained from \(\rho_G\) on the hyperplanes of \(V\).

The nerve and crosscut equivalences can be chosen equivariantly; see
\cite{ref12}. The resulting representation-theoretic viewpoint is also
consistent with the matroid-homology framework of Moci--Pezzoli \cite{ref18}. Therefore they induce isomorphisms of
\(K[\Aut(G)]\)-modules on reduced homology. The proper subspace lattice
of \(V\) is the spherical building for \(\mathrm{GL}(V)\), and its
top reduced homology is the Steinberg module. Since the action of
\(\Aut(G)\) on the building factors through \(\rho_G\), the resulting
module is precisely the inflation displayed in
\eqref{eq:steinberg}. The dimension formula follows from
Solomon--Tits.
\end{proof}

\medskip\noindent\textbf{Proposition 5.4 (full simplicial action).}
For \(r\ge3\), the normal subgroup
\((S_m)^{\theta_r(p)}\le\Aut(C(G))\) acts trivially on top reduced
homology, so the full simplicial action factors through
\(\mathrm{PGL}(r,p)\). For \(r=2\),
\[
\widetilde H_0(C(G);K)
\cong
\ker\!\left(K^{p+1}\xrightarrow{\sum}K\right)
\]
with the natural \(S_{p+1}\)-action.

\begin{proof}
Internal permutations of a projective fiber fix every member of the
hyperplane cover setwise and hence act trivially on its nerve. The
rank-two statement follows from the \(p+1\) connected components.
\end{proof}

\subsection{Realization and modular consequences}
\label{realization-and-modular-consequences}

Bryant and Kov\'acs proved that for each linear group \(H\) of finite
dimension \(r\ge2\) over \(\mathbb F_p\), there exists a finite
\(p\)-group \(Q\) for which the group induced by \(\Aut(Q)\) on
\(Q/\Phi(Q)\) is exactly \(H\) \cite[Theorem~1]{ref13}. This concerns the
image of the induced action; it does not assert \(\Aut(Q)\cong H\).

\medskip\noindent\textbf{Corollary 5.5 (realization of Steinberg restrictions).}
Let \(H\le\mathrm{GL}(r,p)\), \(r\ge2\), and let \(K\) be any field.
There exists a finite \(p\)-group \(Q\) such that, after inflation
along \(\Aut(Q)\to H\), the module
\[
\operatorname{Res}^{\mathrm{GL}(r,p)}_H\St_{r,K}
\]
occurs as \(\widetilde H_{r-2}(C(Q);K)\).

\begin{proof}
Choose \(Q\) by the Bryant--Kov\'acs realization theorem and apply
Theorem~5.3.
\end{proof}

\medskip\noindent\textbf{Corollary 5.6 (\(p\)-local freeness).}
Let \(K\) be a field of characteristic \(p\) and let
\[
U\le\rho_G(\Aut(G))\le\mathrm{GL}(V)
\]
be a \(p\)-subgroup. Since \(\ker\rho_G\) acts trivially by
Theorem~5.3, we regard \(\widetilde H_{r-2}(C(G);K)\) as a
\(K[\rho_G(\Aut(G))]\)-module. Its restriction to \(U\) is a free
\(K[U]\)-module of rank
\[
\frac{p^{\binom r2}}{|U|}.
\]

\begin{proof}
The Steinberg module in defining characteristic is projective over
\(K[\mathrm{GL}(r,p)]\) \cite[Chapter~9]{ref25}. Restriction to \(U\)
preserves projectivity because \(K[\mathrm{GL}(r,p)]\) is free as a
right \(K[U]\)-module. Since \(K[U]\) is local for a finite
\(p\)-group in characteristic \(p\), every finitely generated
projective \(K[U]\)-module is free. Comparing dimensions gives the
stated rank. If \(U\) is a Sylow \(p\)-subgroup of
\(\mathrm{GL}(r,p)\), then \(|U|=p^{\binom r2}\), so the restriction
is the regular \(K[U]\)-module.
\end{proof}

\section{Matroidal resolutions and multigraded Betti numbers}
\label{matroidal-resolutions-and-multigraded-betti-numbers}

Let
\[
S=K[x_g:g\in E],
\qquad
R=K[C(G)].
\]
The minimal nonfaces of \(C(G)=\NS(M_G)\) are exactly the bases of
\(M_G\), so
\[
I_{C(G)}=(x_B:B\text{ a basis of }M_G).
\]
This is the basis ideal of a matroid. By the basis-exchange axiom it
is a matroidal ideal; matroidal ideals have linear quotients, and
because all generators have degree \(r\), the ideal \(I_{C(G)}\) has
an \(r\)-linear minimal free resolution \cite{ref7,ref8}. Consequently
\[
\beta_{i,j}(R)=0
\quad\text{for }i\ge1\text{ unless }j-i=r-1,
\qquad
\reg R=r-1.
\]

For \(W\subseteq E\), let \(P(W)\) be the set of projective points met
by \(W\), and let
\[
M(W):=\operatorname{si}(M_G|W)
\]
be the simplification of the restriction.

\begin{theorem}[explicit multigraded Betti numbers]\label{thm:multigraded-betti}
For \(i\ge1\),
\begin{equation}\label{eq:multigraded-betti}
\beta_{i,W}(R)=
\begin{cases}
\left|\mu_{L(M(W))}(\hat0,\hat1)\right|,
& \operatorname{rk}M(W)=r,\quad |W|=i+r-1,\\[2mm]
0,&\text{otherwise}.
\end{cases}
\end{equation}

\begin{proof}
Hochster's formula in squarefree multidegree \(W\) gives
\[
\beta_{i,W}(R)
=
\dim_K
\widetilde H_{|W|-i-1}(C(G)[W];K).
\]
If \(\operatorname{rk}_{M_G}(W)<r\), every subset of \(W\) is
non-spanning, so \(C(G)[W]=2^W\) is a simplex and the relevant reduced
homology vanishes.

Suppose \(\operatorname{rk}_{M_G}(W)=r\). Restriction commutes with
the non-spanning condition, hence
\[
C(G)[W]=\NS(M_G|W).
\]
The restriction is loopless of rank \(r\), so its non-spanning
complex has reduced homology only in degree \(r-2\), where
\[
\dim_K\widetilde H_{r-2}(C(G)[W];K)
=
\left|\mu_{L(M_G|W)}(\hat0,\hat1)\right|.
\]
Thus Hochster's term can be nonzero only when
\(|W|-i-1=r-2\), equivalently \(|W|=i+r-1\). Passing to the
simplification \(M(W)\) only identifies parallel elements and leaves
the lattice of flats unchanged up to canonical isomorphism. This
proves \eqref{eq:multigraded-betti}.
\end{proof}
\end{theorem}

\begin{remark}[Tutte evaluation]
In the nonzero case of Theorem~\ref{thm:multigraded-betti} one may equivalently write
\[
\beta_{i,W}(R)=T_{M(W)}(1,0),
\]
because for a loopless rank-\(r\) matroid \(M\),
\[
\mu_{L(M)}(\hat0,\hat1)=(-1)^rT_M(1,0).
\]
This Tutte evaluation should not be confused with Crapo's beta
invariant.
\end{remark}

\begin{corollary}[support symmetry]\label{cor:support-symmetry}
If \(W,W'\subseteq E\) satisfy
\[
|W|=|W'|,
\qquad
P(W)=P(W'),
\]
then \(\beta_{i,W}(R)=\beta_{i,W'}(R)\) for every \(i\).
Moreover, every \(\sigma\in\Aut(C(G))\), acting by permutation of the
variables, induces isomorphisms
\[
\Tor_i^S(R,K)_W
\cong
\Tor_i^S(R,K)_{\sigma(W)}.
\]
Consequently \(\beta_{i,W}=\beta_{i,\sigma(W)}\), and the stabilizer
of \(W\) acts on
\[
\Tor_i^S(R,K)_W
\cong
\widetilde H_{|W|-i-1}(C(G)[W];K).
\]
\end{corollary}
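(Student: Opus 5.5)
The first assertion follows directly from Theorem~\ref{thm:multigraded-betti}. The right-hand side of \eqref{eq:multigraded-betti} depends on \(W\) only through three things: the cardinality \(|W|\), the rank \(\operatorname{rk}M(W)\), and the lattice of flats of the simplification \(M(W)\). The simplification \(M(W)=\operatorname{si}(M_G|W)\) has exactly one element for each parallel class met by \(W\). The parallel classes of \(M_G\) are the projective fibers \(E_L\), so \(M(W)\) is canonically the restriction of \(\mathrm{PG}(r-1,p)\) to the point set \(P(W)\). Its rank is \(\dim\operatorname{span}P(W)\), and its lattice of flats is the lattice of flats of that projective configuration. Both are therefore functions of \(P(W)\) alone. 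Now take \(W,W'\) with \(|W|=|W'|\) and \(P(W)=P(W')\). For each \(i\ge1\), either both fall into the nonzero case of \eqref{eq:multigraded-betti}, with the same Möbius value, or both fall into the zero case. For \(i=0\) the only nonzero Betti number sits in multidegree \(\emptyset\), and \(W=\emptyset\) exactly when \(W'=\emptyset\). This gives \(\beta_{i,W}=\beta_{i,W'}\) for every \(i\).

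For the equivariance statement, I would let \(\sigma\in\Aut(C(G))\) act on \(S\) by \(x_g\mapsto x_{\sigma(g)}\). This is a graded \(K\)-algebra automorphism of \(S\) that sends the monomial of multidegree \(W\) to the monomial of multidegree \(\sigma(W)\). Since \(\sigma\) permutes faces, it permutes nonfaces, and in particular minimal nonfaces. By Lemma~\ref{lem:recover-matroid} these minimal nonfaces are the bases of \(M_G\). Hence \(\sigma(I_{C(G)})=I_{C(G)}\), and \(\sigma\) induces a ring automorphism of \(R\) that is semilinear over \(S\) with respect to \(\sigma\).

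Next, apply \(\sigma\) to a minimal multigraded free resolution \(F_\bullet\) of \(R\). Twisting each module structure by \(\sigma\) produces another minimal free resolution of \(R\), in which \(S(-W)\) is replaced by \(S(-\sigma(W))\). Comparing Tor computed from the two resolutions, or equivalently applying \(\sigma\) to the Koszul complex \(K_\bullet\otimes_S R\), yields \(K\)-linear isomorphisms \(\Tor_i^S(R,K)_W\cong\Tor_i^S(R,K)_{\sigma(W)}\). This immediately gives \(\beta_{i,W}=\beta_{i,\sigma(W)}\).

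When \(\sigma(W)=W\), the same map is an automorphism of \(\Tor_i^S(R,K)_W\), and it respects composition. The stabilizer of \(W\) therefore acts on this space. The map \(\sigma\) restricts to a simplicial automorphism of the induced subcomplex \(C(G)[W]\), so it also acts on \(\widetilde H_{|W|-i-1}(C(G)[W];K)\).

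The main obstacle is the compatibility of the Hochster isomorphism with these actions. To handle it I would use the Koszul-complex proof of Hochster's formula. There, the multidegree-\(W\) strand of \(K_\bullet\otimes R\) is identified with the relative simplicial cochain complex of the pair \((2^W, C(G)[W])\), which is dual to \(\widetilde C_\bullet(C(G)[W])\) up to a shift. The identification uses the basis of monomials \(x_{W\setminus F}\,e_F\). Permuting the variables permutes these basis elements by \(\sigma\), up to the signs coming from reordering wedge factors. Those same signs appear in the induced action on oriented simplices, so the isomorphism is equivariant. Since the dimensions are finite, the cohomological identification can then be converted to homology, possibly as the dual representation. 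For the final claim I would state the result as an isomorphism of stabilizer-representations up to this duality, or else check that the sign conventions make the two representations agree directly.
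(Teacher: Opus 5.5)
Your first assertion is handled correctly, and in the way the paper intends; the paper gives no separate proof and treats the corollary as immediate from Theorem~\ref{thm:multigraded-betti}. The right-hand side of \eqref{eq:multigraded-betti} depends only on \(|W|\) and on \(M(W)\cong\mathrm{PG}(r-1,p)|_{P(W)}\), and \(i=0\) is trivial. Transporting a minimal resolution (or the Koszul complex) along \(x_g\mapsto x_{\sigma(g)}\) is also fine. It gives \(\Tor_i^S(R,K)_W\cong\Tor_i^S(R,K)_{\sigma(W)}\) and hence an action of the stabilizer of \(W\) on \(\Tor_i^S(R,K)_W\). There are two small slips. First, \(\sigma(I_{C(G)})=I_{C(G)}\) needs only that \(\sigma\) permutes nonfaces, not Lemma~\ref{lem:recover-matroid}. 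Second, the degree-\(W\) strand of \(K_\bullet\otimes_S R\) has basis \(x_{W\setminus F}e_F\) with \(W\setminus F\) a face, so it is the augmented cochain complex of \(C(G)[W]\) itself. The relative complex of \((2^W,C(G)[W])\) is instead the strand of \(K_\bullet\otimes_S I_{C(G)}\).

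The real problem is your last paragraph. You claim the reordering signs on \(e_F\) match those on the oriented simplex \(W\setminus F\), so that Hochster's isomorphism is equivariant; this is false. To make \(x_{W\setminus F}e_F\mapsto\pm\varphi_{W\setminus F}\) a chain map you need the shuffle sign \(\varepsilon(F,W\setminus F)\), defined by \(e_F\wedge e_{W\setminus F}=\varepsilon(F,W\setminus F)\,e_W\). Applying \(\sigma\) to this identity shows that, once the reorderings of \(F\) and \(W\setminus F\) are accounted for, the sign changes by \(\operatorname{sgn}(\sigma|_W)\). So the identification intertwines the actions only up to the sign character of \(\sigma|_W\), and neither a sign convention nor dualization removes this.

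Here is a concrete counterexample. Take \(r=2\), take \(a,b\) in distinct fibers, and set \(W=\{a,b\}\). Let \(\sigma\in S_m\wr S_{p+1}\) swap the two fibers with \(\sigma(a)=b\) and \(\sigma(b)=a\). Then \(\Tor_1^S(R,K)_W\) is spanned by the class of the minimal generator \(x_ax_b\), which \(\sigma\) fixes. But \(\sigma\) sends the generator \([a]-[b]\) of \(\widetilde H_0(C(G)[W];K)\) to its negative. For \(\operatorname{char}K\neq2\) these are non-isomorphic \(K[\langle\sigma\rangle]\)-modules.

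The correct equivariant statement is \(\Tor_i^S(R,K)_W\cong\widetilde H^{|W|-i-1}(C(G)[W];K)\otimes\operatorname{sgn}_W\) as stabilizer-modules. The cleanest proof goes through \(I_{C(G)}\). Its Koszul strand is, with no signs, the augmented chain complex of the Alexander dual of \(C(G)[W]\) inside \(W\). Combinatorial Alexander duality on \(\partial 2^W\) then contributes exactly the orientation character \(\operatorname{sgn}_W\).

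So you have two options. You can prove only the weak reading of the final clause: the stabilizer acts on a space of dimension \(\dim_K\widetilde H_{|W|-i-1}(C(G)[W];K)\), which your earlier steps already establish. Or you can state and prove the sign-twisted version. The untwisted equivariant identification you propose to check is false in general.
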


\section{Explicit face and graded Betti
formulas}\label{explicit-face-and-graded-betti-formulas}

\medskip\noindent\textbf{Proposition 7.1 (face enumeration).}
Put \(n=q(p^r-1)\), and let \(S_{p,r,q}(k)\) be the number of
\(k\)-subsets of \(E\) whose images span \(V\). Then
\begin{equation}\label{eq:spanning-count}
S_{p,r,q}(k)=
\sum_{j=0}^{r}
\qbinom{r}{j}
(-1)^{r-j}p^{\binom{r-j}{2}}
\binom{q(p^{j}-1)}{k}.
\end{equation}
Consequently,
\[
f_{k-1}(C(G))=\binom{n}{k}-S_{p,r,q}(k).
\]

\begin{proof}
For each \(j\)-dimensional subspace \(U\le V\), its nonzero vectors
have \(q(p^j-1)\) lifts in \(E\). Möbius inversion on the subspace
lattice, using
\[
\mu(U,V)=(-1)^c p^{\binom{c}{2}},
\qquad c=\operatorname{codim}(U),
\]
counts the \(k\)-subsets whose image spans \(V\) and gives
\eqref{eq:spanning-count}; see \cite{ref23}. Subtracting these
spanning subsets from all \(\binom{n}{k}\) \(k\)-subsets gives the
\(f\)-vector. Moreover, \(k!S_{p,r,q}(k)\) counts ordered generating
\(k\)-tuples with pairwise distinct entries from \(E\), clarifying the
relation with Hall's Eulerian function for unrestricted ordered tuples.
\end{proof}

\medskip\noindent\textbf{Theorem 7.2 (closed graded Betti formula).}
For \(1\le i\le n-r+1\),
\begin{equation}\label{eq:graded-betti}
\beta_{i,i+r-1}(R)=
\sum_{j=0}^{r-1}
(-1)^j\qbinom{r}{j}
p^{\binom{r-j}{2}}
\binom{q(p^r-p^j)}{i+r-1}.
\end{equation}

\begin{proof}
Since the resolution is linear, write
\(\beta_i=\beta_{i,i+r-1}(R)\). The Hilbert numerator satisfies
\[
1+\sum_{i\ge1}(-1)^i\beta_i t^{i+r-1}
=
\sum_{k\ge0}f_{k-1}t^k(1-t)^{n-k}.
\]
Substitute \(f_{k-1}=\binom nk-S_{p,r,q}(k)\).
For positive degrees the contribution of \(\binom nk\) cancels.
Interchanging the finite sums and using
\[
\sum_k(-1)^k
\binom Nk\binom{n-k}{m-k}
=
\binom{n-N}{m}
\]
reduces the contribution of a \(j\)-dimensional subspace to
\(\binom{q(p^r-p^j)}m\).

Now set \(m=i+r-1\) and
\[
c_j=\qbinom rj(-1)^{r-j}p^{\binom{r-j}{2}}.
\]
Coefficient comparison gives
\[
-(-1)^m\sum_j c_j\binom{q(p^r-p^j)}m
=
(-1)^i\beta_i.
\]
Since \(m=i+r-1\), this is equivalent to
\[
\beta_i=(-1)^r\sum_jc_j\binom{q(p^r-p^j)}m,
\]
which is exactly \eqref{eq:graded-betti}.
\end{proof}

\medskip\noindent\textbf{Corollary 7.3 (configuration identity).}
For \(1\le i\le n-r+1\),
\begin{equation}\label{eq:configuration-identity}
\sum_{\substack{W\subseteq E\\ |W|=i+r-1\\ \operatorname{rk}_G(W)=r}}
\left|\mu_{L(M(W))}(\hat0,\hat1)\right|
=
\sum_{j=0}^{r-1}
(-1)^j\qbinom rj
p^{\binom{r-j}{2}}
\binom{q(p^r-p^j)}{i+r-1}.
\end{equation}
In particular, the alternating Gaussian expression is a nonnegative
integer. It is in fact strictly positive throughout the stated range:
every cardinality between \(r\) and \(n\) occurs for a support
containing a basis of \(M_G\), and the Möbius invariant of the
resulting geometric lattice is nonzero by Rota's sign theorem \cite{ref11}.

\begin{proof}
By multigraded decomposition,
\[
\beta_{i,i+r-1}(R)
=
\sum_{\substack{W\subseteq E\\ |W|=i+r-1}}
\beta_{i,W}(R).
\]
Theorem~\ref{thm:multigraded-betti} identifies the nonzero summands with the absolute Möbius
invariants on the left of \eqref{eq:configuration-identity}, whereas
Theorem~7.2 evaluates the same graded Betti number by the right-hand
side.
\end{proof}

\medskip\noindent\textbf{Corollary 7.4 (extremal terms).}
One has
\[
\beta_{1,r}(R)=\frac{q^r|\mathrm{GL}(r,p)|}{r!},
\qquad
\beta_{n-r+1,n}(R)=p^{\binom r2}.
\]

\begin{proof}
The number \(\beta_{1,r}\) counts bases of \(M_G\): choose an ordered
basis of \(V\), lift each vector in \(q\) ways, and divide by \(r!\).
For \(i=n-r+1\), formula \eqref{eq:graded-betti} has \(m=n\); only the
\(j=0\) term survives.
\end{proof}

\subsection{Compatibility with the NBC/Tutte
description}\label{compatibility-with-the-nbctutte-description}

Binder's NBC/Tutte formula \cite[Chapter~5]{ref19} computes
the same graded Tor groups over Q. We use it only as a compatibility
check: after translating to the grading convention adopted here, it
gives the same Betti numbers as Theorem 7.2. The contribution of the
present formula is its explicit evaluation for the uniform parallel
extension of projective geometry.

\section{Depth, Cohen--Macaulayness and extremal invariants}
\label{depth-cohen-macaulayness-and-extremal-invariants}

By Section~2.2(b) and Theorem~\ref{thm:multigraded-betti}, the standard
non-spanning-complex results specialize to
\[
\depth R=r-1,\qquad
\reg R=r-1,\qquad
\operatorname{pd}_S R=n-r+1.
\]
For the full complex, the join with the \(q\)-vertex simplex
\(2^\Phi\) is a polynomial extension, so
\[
\depth K[N(G)]=q+r-1,\qquad
\reg K[N(G)]=r-1,
\]
while the projective dimension over the polynomial ring on
\(|G|=n+q\) variables remains \(n-r+1\).

\medskip\noindent\textbf{Theorem 8.1 (Cohen--Macaulay classification).}
For every nontrivial finite \(p\)-group \(G\) and every field \(K\),
the ring \(K[N(G)]\) is Cohen--Macaulay if and only if \(G\) is
cyclic or
\[
G\cong C_2\times C_2.
\]

\begin{proof}
For cyclic \(G\), the effective complex is a simplex. Assume \(r\ge2\).
Polynomial extension by the cone variables preserves and reflects
Cohen--Macaulayness, so it suffices to consider \(C(G)\).
The largest hyperplane of \(M_G\) has
\[
q(p^{r-1}-1)
\]
elements, hence
\[
\dim K[C(G)]=q(p^{r-1}-1),
\qquad
\depth K[C(G)]=r-1.
\]
Thus Cohen--Macaulayness is equivalent to
\[
r-1=q(p^{r-1}-1).
\]
If \(r\ge3\), then
\[
p^{r-1}-1\ge2^{r-1}-1\ge r>r-1,
\]
so equality is impossible. Hence \(r=2\), and the condition becomes
\(1=q(p-1)\), forcing \(p=2\) and \(q=1\). Therefore
\(G\cong C_2\times C_2\).
\end{proof}

\section{Examples}
\label{examples}

\medskip\noindent\textbf{Example 9.1.}
For \(C_4\times C_2\), \(D_8\), and \(Q_8\),
\[
(p,r,q)=(2,2,2),\qquad n=6,\qquad f=(1,6,3),
\]
and the positive graded Betti numbers are
\[
\beta_{1,2}=12,\quad
\beta_{2,3}=28,\quad
\beta_{3,4}=27,\quad
\beta_{4,5}=12,\quad
\beta_{5,6}=2.
\]

\medskip\noindent\textbf{Example 9.2.}
For \(C_2^3\),
\[
(p,r,q)=(2,3,1),\qquad n=7,\qquad f=(1,7,21,7),
\]
and the positive Betti numbers along \(j-i=2\) are
\[
28,\ 77,\ 84,\ 42,\ 8.
\]

\medskip\noindent\textbf{Example 9.3.}
For \(C_3\times C_3\),
\[
(p,r,q)=(3,2,1),\qquad n=8,\qquad f=(1,8,4),
\]
and the positive Betti numbers along \(j-i=1\) are
\[
24,\ 88,\ 150,\ 144,\ 80,\ 24,\ 3.
\]

\medskip\noindent\textbf{Example 9.4 (\(q>p\) check).}
For
\[
(p,r,q)=(2,2,3),\qquad n=9,\qquad f=(1,9,9,3),
\]
the positive Betti sequence is
\[
(27,108,207,234,165,72,18,2),
\]
providing an additional check in a regime not covered by the preceding
examples.

\section{Concluding remarks and open
problems}\label{concluding-remarks-and-open-problems}

The Frattini-core viewpoint separates the topology available for
arbitrary finite noncyclic groups from the projective-matroidal
structure special to $p$-groups. For $p$-groups, the same geometry
simultaneously controls simplicial symmetry, Steinberg homology,
multigraded Tor and the closed Betti formulas. Natural next questions
include determining how much additional information is recovered by the
\(\operatorname{Aut}(G)\)-action when N(G) alone does not determine G; computing \(C_{\Phi}(G)\)
explicitly for finite nilpotent groups, where \(G/\Phi(G)\) is a product of
elementary abelian Sylow factors and the subgroup lattice can be
analyzed through products/joins of subspace lattices, in the spirit of
Quillen's poset-topological methods \cite{ref15}; and comparing the
non-generating complex with independence-type complexes that may retain
more of the multiplication law.

\section*{Author information}
Luis Alfredo Dupont García, Facultad de Matemáticas, Universidad Veracruzana,
Paseo 112, Lote 2, Col. Nuevo Xalapa, C.P. 91097, Xalapa, Veracruz, Mexico.\\
E-mail: \href{mailto:ldupont@uv.mx}{ldupont@uv.mx}.\\
ORCID: \href{https://orcid.org/0000-0002-9044-3547}{0000-0002-9044-3547}.

\section*{Funding}
No specific funding was received for this research.

\section*{Competing interests}
The author declares no competing financial or non-financial interests.

\section*{Data availability}
No datasets were generated or analyzed in this study.

\section*{Artificial intelligence disclosure}
Generative AI and AI-assisted technologies. During the preparation of this work, the author used OpenAI ChatGPT (GPT-5.6 Sol) to assist with language editing, academic writing refinement, and support in reviewing the clarity and consistency of the mathematical exposition. The author produced, reviewed, and verified the mathematical content, proofs, citations, originality, and final form of the manuscript, and assumes full responsibility for them.


\begin{thebibliography}{99}
\bibitem{ref19} K. Binder. \emph{Tor Groups of the Stanley--Reisner Ring of a Matroid}. Ph.D. thesis, The Ohio State University, Columbus, Ohio (2025), Chapter 5.
\bibitem{ref26} A. Björner. The homology and shellability of matroids and geometric lattices. In \emph{Matroid Applications}, Encyclopedia of Mathematics and its Applications 40, 226--283, Cambridge University Press, Cambridge (1992).
\bibitem{ref9} A. Björner. Topological methods. In \emph{Handbook of Combinatorics}, Vol. 2, 1819--1872, Elsevier, Amsterdam (1995).
\bibitem{ref10} K. S. Brown. \emph{Buildings}. Springer, New York (1989).
\bibitem{ref21} K. S. Brown. The coset poset and probabilistic zeta function of a finite group. \emph{Journal of Algebra}, 225 (2000), 989--1012.
\bibitem{ref13} R. M. Bryant; L. G. Kovács. Lie representations and groups of prime power order. \emph{Journal of the London Mathematical Society}, 17 (1978), no. 3, 415--421; \href{https://doi.org/10.1112/jlms/s2-17.3.415}{doi:10.1112/jlms/s2-17.3.415}.
\bibitem{ref1} P. J. Cameron. Simplicial complexes defined on groups. \emph{Open Mathematics}, 23 (2025), 20250171.
\bibitem{ref2} P. J. Cameron; A. Lakshmanan S.; M. V. Ajith. Hypergraphs defined on algebraic structures. \emph{Communications in Combinatorics and Optimization}, 11 (2026), 131--144.
\bibitem{ref20} P. Hall. The Eulerian functions of a group. \emph{The Quarterly Journal of Mathematics}, os-7 (1936), 134--151.
\bibitem{ref27} T. Hibi. Quotient algebras of Stanley--Reisner rings and local cohomology. \emph{Journal of Algebra}, 140 (1991), 336--343; \href{https://doi.org/10.1016/0021-8693(91)90159-6}{doi:10.1016/0021-8693(91)90159-6}.
\bibitem{ref7} J. Herzog; T. Hibi. \emph{Monomial Ideals}. Springer, London (2011).
\bibitem{ref8} J. Herzog; T. Hibi. Discrete polymatroids. \emph{Journal of Algebraic Combinatorics}, 16 (2002), 239--268.
\bibitem{ref24} J. W. P. Hirschfeld. \emph{Projective Geometries over Finite Fields}. Second edition, Oxford University Press, Oxford (1998).
\bibitem{ref25} J. E. Humphreys. \emph{Modular Representations of Finite Groups of Lie Type}. London Mathematical Society Lecture Note Series 326, Cambridge University Press, Cambridge (2006).
\bibitem{ref22} T. Johnsen; H. Verdure. Hamming weights and Betti numbers of Stanley--Reisner rings associated to matroids. \emph{Applicable Algebra in Engineering, Communication and Computing}, 24 (2013), 73--93.
\bibitem{ref16} C. Kratzer; J. Thévenaz. Type d'homotopie des treillis et treillis des sous-groupes d'un groupe fini. \emph{Commentarii Mathematici Helvetici}, 60 (1985), 85--106.
\bibitem{ref3} A. Lucchini. Generating hypergraphs of finite groups. \emph{Ars Combinatoria}, 161 (2024), 17--28.
\bibitem{ref4} A. Lucchini; M. Stanojkovski. Independence and strong independence complexes of finite groups. \emph{Journal of the London Mathematical Society}, 113 (2026), e70605.
\bibitem{ref6} E. Miller; B. Sturmfels. \emph{Combinatorial Commutative Algebra}. Springer, New York (2005).
\bibitem{ref18} L. Moci; G. M. Pezzoli. Representations of automorphism groups on the homology of matroids. \emph{European Journal of Combinatorics}, 94 (2021), 103312.
\bibitem{ref5} J. Oxley. \emph{Matroid Theory}. Second edition, Oxford University Press, Oxford (2011).
\bibitem{ref15} D. Quillen. Homotopy properties of the poset of nontrivial \(p\)-subgroups of a group. \emph{Advances in Mathematics}, 28 (1978), 101--128.
\bibitem{ref11} G. C. Rota. On the foundations of combinatorial theory I. Theory of Möbius functions. \emph{Zeitschrift für Wahrscheinlichkeitstheorie und Verwandte Gebiete}, 2 (1964), 340--368.
\bibitem{ref17} J. Shareshian. On the shellability of the order complex of the subgroup lattice of a finite group. \emph{Transactions of the American Mathematical Society}, 353 (2001), 2689--2703.
\bibitem{ref14} L. Solomon. The Steinberg character of a finite group with BN-pair. In \emph{Theory of Finite Groups}, 213--221, Benjamin, New York (1969).
\bibitem{ref23} R. P. Stanley. \emph{Enumerative Combinatorics}, Vol. 1. Second edition, Cambridge University Press, Cambridge (2012).
\bibitem{ref12} J. Thévenaz; P. J. Webb. Homotopy equivalence of posets with a group action. \emph{Journal of Combinatorial Theory, Series A}, 56 (1991), 173--181; \href{https://doi.org/10.1016/0097-3165(91)90030-K}{doi:10.1016/0097-3165(91)90030-K}.
\end{thebibliography}
\end{document}